\numberwithin{equation}{section}
\theoremstyle{plain}
\newtheorem{theorem}{Theorem}
\newtheorem{lemma}{Lemma}
\newtheorem{corollary}{Corollary}
\theoremstyle{definition}
\theoremstyle{remark}
\newtheorem{remark}{Remark}
\begin{document}
\setattribute{journal}{name}{}
\begin{frontmatter}
\title{Asymptotics of a  Clustering Criterion for Smooth Distributions}
\runtitle{Clustering Criterion}

\begin{aug}
\author{\fnms{Karthik} \snm{Bharath}}\thanks{Ohio State University},
\author{Vladimir Pozdnyakov }\footnote{University of Connecticut}
\and
\author{Dipak. K. Dey}\footnotemark[\value{footnote}]

\end{aug}

\begin{abstract}
We develop a clustering framework for observations from a population with a smooth probability distribution function and derive its asymptotic properties. A clustering criterion based on a linear combination of order statistics is proposed. The asymptotic behavior of the point at which the observations are split into two clusters is examined. The results obtained can then be utilized to construct an interval estimate of the point which splits the data and develop tests for bimodality and presence of clusters.
%
 \end{abstract}

\begin{keyword}[class=AMS]
\kwd[Primary ]{62F05}
\kwd{62G30}
\kwd[; Secondary ]{60F17}
\kwd{62M02}
\end{keyword}

\begin{keyword}
\kwd{Clustering}
\kwd{Trimmed means}
\kwd{CLT}
\end{keyword}

\end{frontmatter}

\section{Introduction}
In this article, we develop a general framework for univariate clustering based on the ideas in \cite{JH2} for the case of observations from a population with smooth and invertible distribution function. Contrary to Hartigan's approach, which was based on a quadratic function of the observed data, our clustering criterion function possesses the advantage of being a linear combination of order statistics---in fact, it is a combination of trimmed sums and sample quantiles.

It is common in certain applications to assume that the data are taken from a population with smooth distribution function.  One important example is modeling in continuous-time mathematical finance, wherein observations are typically increments from a continuous-time stochastic process, and therefore, have smooth distributions because of presence of It\^{o} integral components. Keeping this in mind, we deviate from the Hartigan's framework and concentrate our attention on a function of the derivative of his {\it split function}. This approach permits us to obviate the existence of a  finite fourth moment assumption imposed by Hartigan in the asymptotic investigation of his criterion function---a second moment assumption at the cost of an additional smoothness condition on our criterion function suffices.  As an added benefit, this modification of Hartigan's approach provides us with the genuine possibility of extending our existing theory to more interesting scenarios involving dependent observations.

 The notion of a ``cluster'' has several reasonable mathematical definitions. As in \cite{JH2},  we adopt a definition based on determining a point which splits the data into clusters via maximizing the between cluster sums of squares. The main results in this article involve the asymptotic behavior of this particular point. The theoretical properties of $k$-means clustering procedure for the univariate and the multivariate cases have been extensively investigated. \cite{DP1}, \cite{DP2} proved strong consistency and asymptotic normality results in the univariate case. \cite{SGJ} proved some weak limit theorems under non-regular conditions for the univariate case. With the intention of having a more robust procedure for clustering, \cite{garcia1}, and \cite{garcia2} propose the trimmed $k$-means clustering and provide a central limit theorem for the multivariate case. { Throughout the article we are primarily concerned with the case $k=2$ on the real line. For extension of the split point approach to the case $k>2$ we refer readers to the discussion in \cite{JH2}.}

On a more practical note, our results enable us to construct an interval estimate of the point at which the data splits; this naturally allows us to develop simple tests for bimodality and presence (or absence) of clusters. Hypothesis tests for the presence (or absence) of clusters in a dataset has attracted considerable interest over the years.  One of the earliest work in this area was by \cite{HE}; they developed a univariate method to test the null hypothesis of normally distributed cluster against the alternative of a two-component mixture of normals. \cite{Wolfe} extended the work of \cite{HE} to the multivariate normal setup using MLE techniques and applied his method to Fisher's Iris data. Motivated by applications in market segmentation, \cite{SJ} proposed a test for clusters based on examining the within-groups scatter matrix. A dataset generated from a large-scale survey of lifestyle statements was considered and the objective was to capture heterogeneity in the distribution of the responses to an appropriate questionnaire. Based on statistics concerning mean distances, minimum-within clusters sums and the resulting F-statistics, \cite{Bock} presented several significance tests for clusters. In fact, he generalized some of the results in \cite{JH2} to the multivariate setup. In similar spirit, we note that in our method, the point which splits the data is invariant to scaling and translation of the data. This permits us to examine the behavior of the point under the null hypothesis of ``no cluster" and thereby construct a suitable test.

In \cite{JH3}, a popular test for unimodality, referred to as Dip test, was proposed and was applied to a dataset pertaining to the quality of 63 statistics departments. Indeed, their test did not possess good power against the specific bimodal alternative. More recently, \cite{HV} proposed a parametric test for bimodality based on the likelihood principle by using two-component mixtures. Their method was applied to investigate the modal structure of the cross-sectional distribution of per-capita log GDP across EU regions. Using the Kolmogorov-Smirnov and the Anderson-Darling statistics, \cite{astro}, performed a test for bimodality in the distribution of neutron-star masses. They compared the empirical cumulative distribution function to the distribution functions of a unimodal normal and a bimodal two-component normal mixture. Our results enable us to construct, on identical lines as the test for clusters, a test for bimodality. The test statistic, again, is based on the point at which the data is split---the split point is the same for all unimodal distributions with finite second moment and can hence be used as the test statistic.

In section \ref{clustering} we introduce the relevant constructs of our clustering framework: a theoretical criterion function and its zero followed by the empirical criterion function and its ``zero''. These quantities are of chief interest in this article. In section \ref{limit}, we prove limit theorems for the  empirical zero by examining the asymptotic behavior of the empirical criterion function and offer numerical verification of the limit results via simulation. Furthermore, we demonstrate the utility of our results on the popular {\tt faithful} dataset pertaining to eruption times for the Old Faithful geyser in Yellowstone National Park, Wyoming, USA. Finally, in section \ref{discussion} we highlight the salient features of our approach, note its shortcomings and comment on possible remedies and extensions.

\section{Clustering Criterion}\label{clustering}
In this section, based on Hartigan's approach, we propose an alternative clustering criterion and examine its properties. We first state our assumptions for the rest of the paper.
 \subsection{Assumptions}
  Let $W_1,W_2,\cdots,W_n$  be i.i.d. random variables with cumulative distribution function $F$. We denote by $Q$ the quantile function associated with  $F$. We make the following assumptions:
\begin{description}
\item [$A1$.] $F$ is invertible for $0<p<1$ and absolutely continuous with respect to Lebesgue measure with density $f$.
\item [$A2$.] $E(W_1)=0$ and $E(W_1^2)=1$.
\item [$A3$.] $Q$ is twice continuously differentiable at any $0<p<1$.
\end{description}
Note that owing to assumption $A1$, the quantile function $Q$ is the regular inverse of $F$ and not the generalized inverse.
\subsection{Empirical Cross-over Function and Empirical Split Point}
Let us first consider the \emph{split function} that was introduced in \cite{JH2} for partitioning a sample into two groups.
The \emph{split function} of $Q$ at $p\in (0,1)$ is defined as
\begin{equation}\label{split*function}
 B(Q,p) = p(Q_l(p))^{2}+(1-p)(Q_u(p))^{2}-\left(\int_0^1Q(q)dq\right)^2,
\end{equation}
where
\begin{equation*}
Q_l(p)=\frac{1}{p}\int_{q< p}Q(q)dq=\frac{1}{p}E[W_1{\mathbb I}_{W_1<Q(p)}]
\end{equation*}
and
\begin{equation*}
Q_u(p)=\frac{1}{1-p}\int_{q\geq p}Q(q)dq =\frac{1}{1-p}E[W_1{\mathbb I}_{W_1\geq Q(p)}]
\end{equation*}
represent the conditional expectations of the random variables $W_i$ up to and from $Q(p)$. Here $\mathbb{I}_A$ denotes the indicator function of a set $A$. In our case since $EW_1=0$ the last term in the definition of the split function is $0$. A value $p_0$ which maximizes the split function is called the \emph{split point}. It is seen that if $Q$ is the regular inverse, as in our case, $p_0$ satisfies the equation
\begin{equation}
\label{splitone}
(Q_u(p_0)-Q_l(p_0))[Q_u(p_0)+Q_l(p_0)-2Q(p_0)]=0,
\end{equation}
where the LHS is the derivative of $B(Q,p)$. Evidently, $(Q_u(p)-Q_l(p)) > 0$ for all $0<p<1$ and we hence, for our purposes, consider the \emph{cross-over function},
\begin{equation}
\label{split}
G(p)= Q_l(p)+Q_u(p)-2Q(p),
\end{equation}
for examining clustering properties. From a statistical perspective, we would like to work with the empirical version of (\ref{split}).
We deviate here from Hartigan's framework and consider the \emph{empirical cross-over function}(ECF), defined in \cite{KB} as
\begin{equation}\label{G_function}
 G_n(p)=\frac{1}{k}\sum_{j=1}^k W_{(j)}-W_{(k)} +\frac{1}{n-k}\sum_{j=k+1}^n W_{(j)}-W_{(k+1)},
\end{equation}
for $\frac{k-1}{n}\leq p <\frac{k}{n}$
{ and
\begin{equation}\label{G_function*at*1}
 G_n(p)=\frac{1}{n}\sum_{j=1}^n W_{(j)}-W_{(n)},
\end{equation}
for $\frac{n-1}{n}\leq p <1$,  where $1 \leq k \leq n-1$.}

The random quantity $G_n$, represents the empirical version of (\ref{split}) and determines the split point for the given data. $G_n$ is an L-statistic with irregular weights and hence not amenable for direct application of existing asymptotic results for L-statistics.
{ Observe that
\begin{align*}
 G_n\left(\frac{0}{n}\right) &= W_{(1)}-W_{(1)}+\frac{1}{n-1}\sum_{j=2}^n W_{(j)}-W_{(2)}\quad \geq 0,\\
  G_n\left(\frac{n-1}{n}\right)&= \frac{1}{n}\sum_{j=1}^{n} W_{(j)}-W_{(n)} \quad \leq 0.
\end{align*}
This simple observation captures the typical behavior of the empirical cross-over function. It starts positive and then at some point crosses the zero line.  The index $k$ at which this change occurs determines the datum $W_{(k)}$ at which the split occurs. In \cite{KB}, it is shown that $G_n(p)$ is a consistent estimator of $G(p)$ for each $0<p<1$ and also that
$\sqrt{n}(G_n(p)-G(p))$ is asymptotically normal.}

{ We now introduce the \emph{empirical split point in range $[a,b]$, $0<a<b<1$}, the empirical counterpart of the $p_0$ as
\begin{equation*}
 p_n=p_n(a,b):=\left\{
 \begin{array}{l}
 0, \mbox{ if } G_n\left(\frac{k-1}{n}\right) < 0 \mbox{ $\forall k$ such that  } na < k < nb+1;\\
 \\
 1, \mbox{ if } G_n\left(\frac{k-1}{n}\right) > 0 \mbox{ $\forall k$ such that  } na < k < nb+1;\\
 \\
 \frac{1}{n}\left[\max\{na < k < nb : G_n\left(\frac{k-1}{n}\right)G_n\left(\frac{k}{n}\right) \leq 0\}\right], \mbox{ otherwise.}
 \end{array}
 \right.
\end{equation*}
The quantity $p_n$ is our estimator of $p_0$, the true split point (when it is in the range). If $p_n$ is equal to 0 or 1, we declare that the split point is outside the range. The asymptotic behavior of $p_n$ can be used for the construction of test for the presence of clusters in the observations, or for the estimation of the true split point.}

\begin{remark}
Let us provide some intuition behind Hartigan's split function.  The $k$-means clustering method for the case $k=2$ requires us to  minimize (with respect to $k^*$) the following within group sum  of squares:
\begin{align*}
W^*&=\sum_{i=1}^{k^*}\left(W_{(i)}-\frac{1}{k^*}\sum_{i=1}^{k^*}W_{(i)}\right)^2+\sum_{i=k^*+1}^{n}\left(W_{(i)}-\frac{1}{n-k^*}\sum_{i=k^*+1}^{n}W_{(i)}\right)^2\\
   &=\sum_{i=1}^{n}W_{(i)}^2-\frac{1}{k^*}\left(\sum_{i=1}^{k^*}W_{(i)}\right)^2-\frac{1}{n-k^*}\left(\sum_{i=k^*+1}^{n}W_{(i)}\right)^2.
\end{align*}
That is, minimizing $W^*$ is equivalent to maximizing
$$\frac{1}{k^*}\left(\sum_{i=1}^{k^*}W_{(i)}\right)^2+\frac{1}{n-k^*}\left(\sum_{i=k^*+1}^{n}W_{(i)}\right)^2$$
or
$$\frac{k^*}{n}\left(\frac{1}{k^*}\sum_{i=1}^{k^*}W_{(i)}\right)^2+\frac{n-k^*}{n}\left(\frac{1}{n-k^*}\sum_{i=k^*+1}^{n}W_{(i)}\right)^2,$$
which is basically an empirical version of Hartigan's split function (\ref{split*function}) and $k^*/n$ will be another version of empirical split point.

In this paper we proceed in parallel with \cite{JH2} (his Theorem 1 and Theorem 2) and prove consistency and asymptotic normality of $p_n$ under a uniqueness assumption. The theoretical conditions that guarantee the uniqueness of the split point is an open question. It is easy to see that for a unimodal symmetric distribution with a finite second moment, the split point is $1/2$, and for all unimodal symmetric light-tailed distributions that we checked the split point was unique. However, \cite{JH2} gives an example of a unimodal symmetric heavy-tailed distribution for which every point in $(0,1)$ is a split point. For a bimodal distribution the split point $p_0$ is typically unique and $Q(p_0)$ lies between the cluster means.

The presented results can be employed for testing ``no-clusters'' hypothesis, testing bimodality, and estimation of the split point. The extension of this technique to the case of $k$ clusters is discussed in \cite{JH2}. In our case, instead of one cross-over function one needs to introduce $k-1$ functions; the split point in this case will be a ($k-1$)-dimensional vector. To find this split point we then need to solve a system of $k-1$ equations. For instance, for partition of data into three groups we need to introduce two cross-over functions
\begin{align*}
G_{1n}\left[\frac{k_1-1}{n},\frac{k_2-1}{n}\right]&=\frac{1}{k_1}\sum_{i=1}^{k_1}W_{(i)}-W_{(k_1)}+\frac{1}{k_2-k_1}\sum_{i=k_1+1}^{k_2}W_{(i)}-W_{(k_1+1)},\\
G_{2n}\left[\frac{k_1-1}{n},\frac{k_2-1}{n}\right]&=\frac{1}{k_2-k_1}\sum_{i=k_1+1}^{k_2}W_{(i)}-W_{(k_2)}+\frac{1}{n-k_2}\sum_{i=k_2+1}^{n}W_{(i)}-W_{(k_2+1)},
\end{align*}
and, respectively, one needs to solve (in an appropriate sense) the following system of equations:
\begin{align*}
G_{1n}\left[\frac{k_1-1}{n},\frac{k_2-1}{n}\right]&=0,\\
G_{2n}\left[\frac{k_1-1}{n},\frac{k_2-1}{n}\right]&=0.
\end{align*}
We do not address the general $k>2$ cluster situation in this paper.

Finally,  as stated in the Introduction, let us remark on the main technical difference between Hartigan's assumptions and ours. Since we deal here with the derivative of the split function, we need a {\it stronger} smoothness condition (the second derivative of $G$ instead of the first one). In return, we work with trimmed means and  a {\it weaker} moment condition suffices (the finite second moment instead of the fourth one as in \cite{JH2}).
\end{remark}
\begin{remark}
Notice that if for constants $\alpha>0$ and $\beta$ and $i=1,\dots,n$,
\begin{equation*}
Z_i=\alpha W_i+\beta  ,
\end{equation*}
and we define $G_n^z$ to be the ECF based on $Z_i$, then,
\begin{align*}
G_n^z\left(\frac{k-1}{n}\right)    &= \frac{1}{k}\displaystyle \sum_{j=1}^k Z_{(j)}-Z_{(k)} + \frac{1}{n-k}\displaystyle \sum_{j=k+1}^n Z_{(j)}-Z_{(k+1)}\\
        &= \alpha \left[\frac{1}{k}\displaystyle \sum_{j=1}^k W_{(j)}-W_{(k)} + \frac{1}{n-k}\displaystyle \sum_{j=k+1}^n W_{(j)}-W_{(k+1)}\right]\\
        &= \alpha G_n\left(\frac{k-1}{n}\right),
\end{align*}
and therefore, $G_n^z$ and $G_n$ cross-over $0$ at the same point. Thus assumption $A2$ is not restrictive since $p_n$ is invariant to scaling and translation of the data; as it should be, since in a clustering problem scale and location changes of the data should not affect the clustering mechanism. We can hence quite safely assume that we are dealing with random variables with mean $0$ and variance $1$.
\end{remark}
\section{Main Results}\label{limit}
Let us start with the functional limit theorem for $U_n(p)=\sqrt{n}(G_n(p)-G(p))$ proved in \cite{KB}.
\begin{theorem}\label{G_n}
 Define
\begin{align*}
\theta_p=&\phantom{+}\frac{1}{p}W_1\mathbb{I}_{W_1<Q(p)}-\frac{1}{p}Q(p)\mathbb{I}_{W_1<Q(p)}\\
         &+\frac{1}{1-p}W_1\mathbb{I}_{W_1\geq Q(p)}-\frac{1}{1-p}Q(p)\mathbb{I}_{W_1\geq Q(p)}\\
         &+\frac{2\mathbb{I}_{W_1<Q(p)}}{f(Q(p))}.
\end{align*}
Under assumptions A1-A3,
\begin{equation*}
U_n(p)\Rightarrow U(p),
\end{equation*}
in the Skorohod space $D[a,b]$, $0<a<b<1$ equipped with the $J_1$ topology, where $U(p)$ is a Gaussian process with mean $0$ and covariance function given by
\begin{equation}\label{cov*function}
C(p,q)=Cov(U(p),U(q))=Cov(\theta_p,\theta_q).
\end{equation}
\end{theorem}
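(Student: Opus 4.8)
The plan is to reduce the irregular L-statistic $G_n(p)$ to an ordinary (smooth) empirical process indexed by $p$ via a \emph{uniform} asymptotic linearization, and then apply a Donsker-type functional central limit theorem. Throughout, put $k = k_n(p) = \lceil np\rceil$, so that $\tfrac{k-1}{n}\le p<\tfrac{k}{n}$ and $\tfrac{k}{n}=p+O(1/n)$. First I would rewrite each summand of $G_n$ in empirical-measure form, $\tfrac1k\sum_{j=1}^k W_{(j)} = \tfrac{n}{k}\int_{-\infty}^{W_{(k)}} w\,dF_n(w) = \tfrac{n}{k}\cdot\tfrac1n\sum_{i=1}^n W_i\,\mathbb I_{W_i\le W_{(k)}}$, and similarly for the upper trimmed sum, and note that $W_{(k)}$ and $W_{(k+1)}$ both equal $F_n^{-1}(p)+O(1/n)$. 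The first ingredient is then the classical uniform Bahadur--Kiefer representation for the quantile process: under A1 and A3 (which ensure $f=1/Q'$ is continuous and bounded away from $0$ on $[Q(a),Q(b)]$),
\begin{equation*}
W_{(k_n(p))} = Q(p) - \frac{F_n(Q(p)) - p}{f(Q(p))} + r_n(p), \qquad \sup_{p\in[a,b]}|r_n(p)| = o_P(n^{-1/2}).
\end{equation*}

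Next I would linearize the trimmed sums around the \emph{deterministic} threshold $Q(p)$. Splitting $\mathbb I_{W_i\le W_{(k)}} = \mathbb I_{W_i\le Q(p)} + \bigl(\mathbb I_{W_i\le W_{(k)}} - \mathbb I_{W_i\le Q(p)}\bigr)$, the main term yields $\tfrac1p\cdot\tfrac1n\sum_i W_i\,\mathbb I_{W_i\le Q(p)}$ (using $k/n=p+O(1/n)$ and that $\tfrac1n\sum_i W_i\,\mathbb I_{W_i\le Q(p)}$ is $O_P(1)$ uniformly in $p$ by a Glivenko--Cantelli argument), while the residual ``oscillation'' term, because $W_{(k)}-Q(p)=O_P(n^{-1/2})$ and $F_n(W_{(k)})-F_n(Q(p)) = -(F_n(Q(p))-p)+O(1/n)$, contributes $-\tfrac{Q(p)}{p}\bigl(F_n(Q(p))-p\bigr) + o_P(n^{-1/2})$. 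Doing the same for the upper sum and combining with $-W_{(k)}-W_{(k+1)}$ via the Bahadur representation above, and writing $F_n(Q(p))-p = \tfrac1n\sum_i(\mathbb I_{W_i\le Q(p)}-p)$, all the pieces collapse into a single average:
\begin{equation*}
U_n(p) = \frac{1}{\sqrt n}\sum_{i=1}^n\bigl(\phi_p(W_i) - E\phi_p(W_1)\bigr) + R_n(p), \qquad \sup_{p\in[a,b]}|R_n(p)| \xrightarrow{P} 0,
\end{equation*}
where $\phi_p$ agrees with $\theta_p$ up to an additive constant; the constant is immaterial since only $\mathrm{Cov}(\theta_p,\theta_q)$ enters the conclusion.

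It then remains to prove weak convergence of the linear term in $(D[a,b],J_1)$. Finite-dimensional convergence to a centered Gaussian vector with covariances $\mathrm{Cov}(\theta_p,\theta_q)$ is the ordinary multivariate CLT; this is legitimate precisely because $|\phi_p(w)|\le C(1+|w|)$ uniformly on $[a,b]$, so $\phi_p(W_1)\in L^2$ --- this is exactly where assumption A2 (finite second moment) is consumed. For tightness I would verify that $\{\phi_p:p\in[a,b]\}$ is an $F$-Donsker class: it is a finite sum of products of the VC class of half-line indicators $\{\mathbb I_{\,\cdot\,<Q(p)}\}$, the fixed square-integrable function $w\mapsto w$, and the continuous bounded coefficients $1/p$, $1/(1-p)$, $2/f(Q(p))$ (A1 and A3 guarantee these are well defined and continuous on $[a,b]$); each building block has a uniformly integrable $L^2$ envelope and is Donsker, so the permanence theorems for Donsker classes apply. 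Since $C(p,q)$ is continuous on $[a,b]^2$, the limit $U$ admits a version with continuous paths, and uniform weak convergence of $U_n$ to such a limit entails convergence in the weaker $J_1$ topology on $D[a,b]$.

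The main obstacle is the uniform linearization, and within it the control of the weighted oscillation term $\sup_{p\in[a,b]}\bigl|\tfrac1n\sum_i W_i\bigl(\mathbb I_{W_i\le W_{(k)}} - \mathbb I_{W_i\le Q(p)}\bigr)\bigr|$ down to order $o_P(n^{-1/2})$ after extracting its linear part. This requires a modulus-of-continuity estimate for the $W$-weighted empirical process over an $O_P(n^{-1/2})$-neighbourhood of $Q(p)$, uniformly in $p$; it is exactly the square-integrability of $W$ (in place of Hartigan's fourth moment, now at the cost of $f$ bounded away from $0$ through A3) that keeps this weighted process under control, so the interplay between A2 and A3 is where the real work concentrates.
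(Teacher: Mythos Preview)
The paper does not actually prove this theorem. Immediately before stating it, the authors write ``Let us start with the functional limit theorem for $U_n(p)=\sqrt{n}(G_n(p)-G(p))$ proved in \cite{KB},'' and no argument follows the statement; the result is simply imported from the companion paper and then used as a black box in the subsequent lemmas. So there is nothing in the present paper to compare your proposal against.

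That said, your outline is a sound and standard route to a result of this type: write $G_n$ as trimmed means plus central order statistics, invoke a uniform Bahadur representation for the quantile pieces, linearize the trimmed sums around the deterministic threshold $Q(p)$, collect the terms into an i.i.d.\ average of influence functions $\phi_p$ matching $\theta_p$ up to a constant, and finish with an empirical-process/Donsker argument for the class $\{\phi_p:p\in[a,b]\}$. You correctly flag the genuinely delicate step---uniform control of the weighted oscillation $\tfrac1n\sum_i W_i(\mathbb I_{W_i\le W_{(k)}}-\mathbb I_{W_i\le Q(p)})$ to $o_P(n^{-1/2})$ after extracting its linear part---and correctly identify that this is where the second-moment assumption A2, in tandem with A3 keeping $f$ bounded away from zero on $[Q(a),Q(b)]$, replaces Hartigan's fourth moment. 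Whether \cite{KB} proceeds exactly this way (versus, say, a direct Stigler-type CLT for L-statistics plus a separate tightness bound) cannot be determined from the present paper, but your plan is coherent and would yield the stated conclusion.
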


The next lemma states that the Gaussian process $U(p)$ allows a continuous modification. This fact will be employed, for example, to justify the usage of the mapping theorem (for instance, \cite{bill} and \cite{pollard1984}).
\begin{lemma}\label{continuity*of*U}
Under assumptions A1-A3, the centered Gaussian process $U(p),a\leq p\leq b$ with covariance function in (\ref{cov*function}) is continuous.
\end{lemma}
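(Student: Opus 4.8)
The plan is to deduce path continuity of $U$ from a Hölder-type estimate on its incremental variance, via the Kolmogorov--Chentsov criterion. Since $U$ is centered Gaussian with $\mathrm{Cov}(U(p),U(q))=\mathrm{Cov}(\theta_p,\theta_q)$, we have, for $p,q\in[a,b]$,
\begin{equation*}
E\bigl(U(p)-U(q)\bigr)^2=\mathrm{Var}(\theta_p-\theta_q)\le E\bigl[(\theta_p-\theta_q)^2\bigr],
\end{equation*}
so it suffices to show $E[(\theta_p-\theta_q)^2]\le K|p-q|$ with a constant $K=K(a,b,F)$; in fact any bound of the form $K|p-q|^{\gamma}$ with $\gamma>0$ would do.

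First I would fix $p<q$ in $[a,b]$ and split $\theta_p-\theta_q$ into the three pieces suggested by the definition of $\theta_p$: the lower-trimmed piece $\frac1p(W_1-Q(p))\mathbb I_{W_1<Q(p)}-\frac1q(W_1-Q(q))\mathbb I_{W_1<Q(q)}$, the corresponding upper-trimmed piece, and the quantile piece $\frac{2}{f(Q(p))}\mathbb I_{W_1<Q(p)}-\frac{2}{f(Q(q))}\mathbb I_{W_1<Q(q)}$. Each piece I would further decompose into (i) the change in the scalar prefactor, (ii) the change in $Q(\cdot)$ evaluated on the fixed region $\{W_1<Q(p)\}$ (resp.\ $\{W_1\ge Q(q)\}$), and (iii) the contribution of the boundary strip $\{Q(p)\le W_1<Q(q)\}$. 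On the compact interval $[a,b]$ assumption A3 makes $Q$, $Q'=1/f(Q)$ and $Q''$ continuous, hence bounded; consequently $0<Q(q)-Q(p)\le M|p-q|$, $|Q'(p)-Q'(q)|\le M'|p-q|$, the factors $1/p$, $1/(1-p)$, $2/f(Q(p))$ are bounded, and $|\tfrac1p-\tfrac1q|+|\tfrac1{1-p}-\tfrac1{1-q}|\le c|p-q|$; moreover $P(Q(p)\le W_1<Q(q))=q-p$, and $E[W_1^2]<\infty$ by A2. Feeding these in: the prefactor-change terms and the $Q$-change terms contribute $O(|p-q|^2)$ each (using $E[W_1^2]<\infty$ for the prefactor terms of the trimmed pieces); the boundary-strip term of the quantile piece contributes at most $4\bigl(\sup_{[a,b]}Q'\bigr)^2(q-p)=O(|p-q|)$; and the boundary-strip terms of the trimmed pieces are $O(|p-q|^3)$ because there $|W_1-Q(q)|\le Q(q)-Q(p)=O(|p-q|)$. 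Combining via $(x+y+z)^2\le 3(x^2+y^2+z^2)$ gives $E[(\theta_p-\theta_q)^2]\le K|p-q|$.

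It then remains to upgrade this to continuity. Because $U$ is centered Gaussian, $E|U(p)-U(q)|^{2m}=(2m-1)!!\,\bigl(E(U(p)-U(q))^2\bigr)^m\le (2m-1)!!\,K^m|p-q|^m$ for every integer $m\ge1$; taking $m=2$ gives $E|U(p)-U(q)|^4\le 3K^2|p-q|^2$, and the Kolmogorov--Chentsov continuity theorem then yields a modification of $U$ with sample paths that are a.s.\ (indeed H\"older) continuous on $[a,b]$. This is exactly the assertion of the lemma, and it is this modification that one uses when appealing to the continuous mapping theorem elsewhere.

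I expect the only genuinely delicate point to be the increment estimate of the second paragraph---more precisely, arranging the decomposition so that it is transparent that the single term of exact order $|p-q|$, as opposed to $|p-q|^2$, is the boundary-strip contribution of $2\mathbb I_{W_1<Q(p)}/f(Q(p))$, whose integrand does not shrink as $q\downarrow p$; everything else is routine bookkeeping with the boundedness of $Q,Q',Q''$ on $[a,b]$ and the second-moment assumption A2. The final passage to continuity is a standard invocation of Gaussian moment comparison together with Kolmogorov's criterion.
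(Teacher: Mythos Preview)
Your argument is correct and arrives at the same key estimate as the paper, $E[(U(p)-U(q))^2]\le K|p-q|$, but by a different route. The paper does not decompose $\theta_p-\theta_q$ at all; instead it observes that the covariance $C(p,q)=\mathrm{Cov}(\theta_p,\theta_q)$, once expanded, is built from continuously differentiable functions of $p$ and $q$ (such as $1/p$, $Q(p)$, $Q'(p)$, $E[W_1\mathbb I_{W_1<Q(p)}]$, $E[W_1^2\mathbb I_{W_1<Q(p)}]$) composed with $\min(p,q)$ and $\max(p,q)$, hence is Lipschitz on $[a,b]^2$; the increment bound then drops out of $E[(U(p)-U(q))^2]=C(p,p)+C(q,q)-2C(p,q)$. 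This is shorter and avoids the term-by-term bookkeeping you carry out, but your decomposition has the merit of making transparent exactly which term contributes the leading order $|p-q|$, as you note. For the final step, the paper simply invokes Theorem~1.4 of Adler (1990) directly from the increment bound, whereas you lift to fourth moments via the Gaussian moment identity and apply Kolmogorov--Chentsov; these are equivalent standard devices.
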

\begin{proof} First note that on the interval $[a,b]$ the functions (of $p$) $1/p$, $1/(1-p)$, $Q(p)$, $1/f(Q(p))=Q'(p)$, $E[W_1{\mathbb I}_{W_1<Q(p)}]$ and $E[W_1^2{\mathbb I}_{W_1<Q(p)}]$  are continuously differentiable. Second, the functions $\max(p,q)$ and $\min(p,q)$ are (globally) Lipschitz continuous on $[a,b]\times[a,b]$. Therefore, the covariance function $C(p,q)$ is Lipschitz continuous on $[a,b]\times[a,b]$; i.e., there is a constant $K$ such that
for all $p$, $q$, $p'$ and $q'$ from $[a,b]$ $$|C(p,q)-C(p',q')|\leq K(|p-p'|+|q-q'|).$$ Therefore,
\begin{align*}
E[U(p)-U(q)]^2&= C(p,p)+C(q,q)-2C(p,q)\\
              &\leq |C(p,q)-C(p,p)|+|C(p,q)-C(p,p)|\\
              &\leq 2K|p-q|.
\end{align*}
By Theorem 1.4 from \cite{adler1990} we get that $U(p)$ is continuous.
\end{proof}
This immediately leads us to the following important consequence.
\begin{corollary}
\label{uniform_Gn}
Under assumptions $A1-A3$, as $n\rightarrow \infty$,
\[
\sup_{a\leq p\leq b}\left|G_n(p)-G(p)\right|\overset{P}\rightarrow 0.
\]
\end{corollary}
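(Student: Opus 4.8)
The plan is to derive the uniform convergence directly from the functional limit theorem (Theorem \ref{G_n}) together with the continuity of the limiting Gaussian process (Lemma \ref{continuity*of*U}), via a continuous-mapping argument. The idea is that weak convergence of $U_n(p)=\sqrt n\,(G_n(p)-G(p))$ in $D[a,b]$ forces the real-valued sequence $\sup_{a\le p\le b}|U_n(p)|$ to be bounded in probability, and then dividing by $\sqrt n$ kills it.

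First I would note that, by Lemma \ref{continuity*of*U}, the limit process $U$ in Theorem \ref{G_n} has a continuous modification, so we may and do assume $P(U\in C[a,b])=1$. Next, consider the functional $\Phi:D[a,b]\to\mathbb R$ given by $\Phi(x)=\sup_{a\le p\le b}|x(p)|$. It is Lipschitz for the uniform metric, and it is a standard feature of the Skorokhod $J_1$ topology that $x_n\to x$ in $J_1$ implies $\Phi(x_n)\to\Phi(x)$ whenever the limit $x$ is continuous: indeed, $J_1$ convergence to a continuous limit is the same as uniform convergence, and then $|\Phi(x_n)-\Phi(x)|\le\sup_{a\le p\le b}|x_n(p)-x(p)|\to 0$. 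Hence $\Phi$ is continuous at every point of $C[a,b]\subset D[a,b]$, its set of discontinuities is contained in $D[a,b]\setminus C[a,b]$, and this set has $U$-probability zero. The mapping theorem (e.g. \cite{bill}, \cite{pollard1984}) therefore yields
\[
\sup_{a\le p\le b}\bigl|U_n(p)\bigr|\ \Rightarrow\ \sup_{a\le p\le b}\bigl|U(p)\bigr|.
\]

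Since the right-hand side is an almost surely finite, nonnegative random variable, the sequence $\bigl(\sup_{a\le p\le b}|U_n(p)|\bigr)_{n\ge1}$ is tight on $\mathbb R$; equivalently $\sup_{a\le p\le b}|U_n(p)|=O_P(1)$. It remains only to unwind the normalisation: since $U_n(p)=\sqrt n\,(G_n(p)-G(p))$,
\[
\sup_{a\le p\le b}\bigl|G_n(p)-G(p)\bigr|=\frac{1}{\sqrt n}\sup_{a\le p\le b}\bigl|U_n(p)\bigr|=\frac{1}{\sqrt n}\,O_P(1)\ \overset{P}{\longrightarrow}\ 0,
\]
which is precisely the assertion.

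The only genuinely delicate point is the passage from $J_1$ weak convergence of $U_n$ to weak convergence of the suprema, i.e. the continuity of $\Phi$ on $(D[a,b],J_1)$ at continuous arguments; this is exactly where Lemma \ref{continuity*of*U} is needed, since it guarantees that the limit law concentrates on $C[a,b]$, where $J_1$ and uniform convergence coincide. Everything after that is routine bookkeeping.
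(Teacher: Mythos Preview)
Your proof is correct and follows essentially the same route as the paper: apply the continuous mapping theorem to the sup functional, using Lemma~\ref{continuity*of*U} to ensure the limit lives in $C[a,b]$, then divide by $\sqrt{n}$. If anything, you are slightly more careful than the paper in spelling out why $\Phi$ is $J_1$-continuous at continuous limits (the paper's phrasing ``continuous on $C[a,b]$ equipped with the uniform metric'' glosses over this point).
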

\begin{proof} Since the functional  $\sup_{p\in [a,b]}|x(p)|$ is continuous on $C[a,b]$ equipped with the uniform metric, and the process $U(p)$ is continuous (that is,  $U(p)\in C[a,b]$ with probability 1), by the mapping theorem (\cite{pollard1984}, p. 70) we have
$$\sup_{a\leq p\leq b}\sqrt{n}\left|G_n(p)-G(p)\right|\Rightarrow\sup_{a\leq p\leq b}|U(p)|.$$
Therefore,
$$\sup_{a\leq p\leq b}\left|G_n(p)-G(p)\right|=\frac{1}{\sqrt{n}}\sup_{a\leq p\leq b}\sqrt{n}\left|G_n(p)-G(p)\right|\overset{P}\rightarrow 0.$$
\end{proof}
The empirical cross-over function is a step-function. The next lemma tells us that the jump at any $p\in(0,1)$ is $o_p(1/\sqrt{n})$.
\begin{lemma}\label{Gn_jump}
Under assumptions $A1-A3$, for $0<p<1$ and $\frac{k-1}{n} \leq p < \frac{k}{n}$, as $n\rightarrow \infty$,
\[
\left|G_n\left(\frac{k-1}{n}\right)-G_n\left(\frac{k-2}{n}\right)\right|=o_p\left(\frac{1}{\sqrt{n}}\right).
\]
\end{lemma}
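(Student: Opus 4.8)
\emph{Proof idea.} The plan is to write the jump out explicitly and then bound its pieces. Fix $0<p<1$ and let $k=k(n)$ satisfy $\frac{k-1}{n}\le p<\frac{k}{n}$; then $G_n\left(\frac{k-1}{n}\right)$ is formed with the index $k$, whereas $G_n\left(\frac{k-2}{n}\right)$ is formed with the index $k-1$. Writing $\bar W_l=\frac1{k-1}\sum_{j=1}^{k-1}W_{(j)}$ and $\bar W_u=\frac1{n-k}\sum_{j=k+1}^{n}W_{(j)}$, and separating the order statistic $W_{(k)}$ out of the relevant partial sums, a short computation gives
\begin{equation*}
G_n\left(\frac{k-1}{n}\right)-G_n\left(\frac{k-2}{n}\right)=\frac{W_{(k)}-\bar W_l}{k}+\frac{\bar W_u-W_{(k)}}{n-k+1}+\left(W_{(k-1)}-W_{(k+1)}\right).
\end{equation*}
The key point is that the two explicit terms $-W_{(k)}$ occurring in the two definitions of the cross-over function cancel, leaving only the three genuinely small summands above.

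For the first two summands, note that since $p$ is fixed in $(0,1)$ we have $k/n\to p$ and $(n-k)/n\to 1-p$, so both $k$ and $n-k+1$ grow at the exact rate $n$. By the standard a.s.\ consistency of central order statistics and of trimmed means — equivalently, from the consistency of $G_n(p)$ established in \cite{KB} — we have $W_{(k)}\to Q(p)$, $\bar W_l\to Q_l(p)$ and $\bar W_u\to Q_u(p)$ almost surely, hence $W_{(k)}-\bar W_l=O_p(1)$ and $\bar W_u-W_{(k)}=O_p(1)$. The first two summands are therefore $O_p(1/n)=o_p(1/\sqrt n)$.

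The only step needing care is the spacing $W_{(k+1)}-W_{(k-1)}$. By the quantile transform, $W_{(k+1)}-W_{(k-1)}\overset{d}{=}Q(U_{(k+1)})-Q(U_{(k-1)})$, where $U_{(1)}\le\cdots\le U_{(n)}$ are the order statistics of an i.i.d.\ $\mathrm{Uniform}(0,1)$ sample; by the mean value theorem this equals $Q'(\xi_n)\left(U_{(k+1)}-U_{(k-1)}\right)$ for some $\xi_n$ between $U_{(k-1)}$ and $U_{(k+1)}$. Since $U_{(k)}\to p$ a.s.\ (and likewise $U_{(k\pm1)}\to p$), the point $\xi_n$ is eventually confined to a compact subinterval of $(0,1)$ containing $p$ on which $Q'$ is continuous by $A3$, so $Q'(\xi_n)=O_p(1)$; on the other hand $E\left[U_{(k+1)}-U_{(k-1)}\right]=2/(n+1)$, so Markov's inequality gives $U_{(k+1)}-U_{(k-1)}=O_p(1/n)$. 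Multiplying, $W_{(k+1)}-W_{(k-1)}=O_p(1/n)=o_p(1/\sqrt n)$, and combining the three estimates proves the lemma. This last step is the anticipated obstacle: one must first localize $\xi_n$ away from $0$ and $1$ before invoking continuity of $Q'$, and one must extract the sharp $O_p(1/n)$ rate for the uniform spacing — a cruder empirical-process argument yields only $O_p(1/\sqrt n)$, which does not suffice.

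Alternatively, the lemma follows directly from the machinery already in place: by Theorem \ref{G_n} and Lemma \ref{continuity*of*U}, $U_n\Rightarrow U$ in $D[a,b]$ with $U$ continuous, and the maximal-jump functional $x\mapsto\sup_{p\in[a,b]}|x(p)-x(p-)|$, being continuous (indeed identically $0$) on $C[a,b]$, yields by the mapping theorem $\sup_{p\in[a,b]}\sqrt n\,\bigl|G_n(p)-G_n(p-)\bigr|\overset{P}\rightarrow 0$ (here $G$ is continuous, so it contributes nothing to the jumps); the asserted bound is the special case $p=\frac{k-1}{n}$.
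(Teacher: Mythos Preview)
Your primary argument is correct and follows essentially the same plan as the paper: write out the jump explicitly, group it into trimmed-mean terms of order $O_p(1/n)$ plus the spacing $W_{(k-1)}-W_{(k+1)}$, and bound the latter separately. Your decomposition
\[
\frac{W_{(k)}-\bar W_l}{k}+\frac{\bar W_u-W_{(k)}}{n-k+1}+\bigl(W_{(k-1)}-W_{(k+1)}\bigr)
\]
is in fact a slightly cleaner regrouping of the same four pieces the paper isolates. The one substantive difference is the spacing bound: the paper quotes a maximal-spacing result of Deheuvels, $\sup_k(W_{(k)}-W_{(k-1)})=O_p(\log n/n)$, whereas you argue directly via the quantile transform and Markov's inequality to get $W_{(k+1)}-W_{(k-1)}=O_p(1/n)$ for this particular central index. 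Your route is more self-contained (no external reference needed) and actually gives a sharper rate for the single spacing, at the cost of the localization step for $\xi_n$ that you correctly flag; the paper's route is shorter once the citation is granted and, as a bonus, controls all spacings simultaneously. Your alternative argument via the maximal-jump functional and the mapping theorem is also valid and is not in the paper; it is the slickest of the three, since it recycles Theorem~\ref{G_n} and Lemma~\ref{continuity*of*U} wholesale and yields the uniform statement $\sup_{p\in[a,b]}\sqrt{n}\,|G_n(p)-G_n(p-)|\overset{P}{\to}0$ directly.
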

\begin{proof}
\begin{align*}
\Big|G_n\left(\frac{k-1}{n}\right)-G_n\left(\frac{k-2}{n}\right)\Big|=\Big|\frac{1}{k}\displaystyle \sum_{i=1}^{k} W_{(i)}-W_{(k)}+\frac{1}{n-k}\displaystyle \sum_{k+1}^nW_{(i)}-W_{(k+1)}\\
-\frac{1}{k-1}\displaystyle \sum_{i=1}^{k-1} W_{(i)}+W_{(k-1)}-\frac{1}{n-k+1}\displaystyle \sum_{i=k}^{n}W_{(i)}+W_{(k)} \Big|.
\end{align*}
Re-arranging terms, the RHS can written as
\begin{align*}
\Big|-\displaystyle \sum_{i=1}^{k-1}\frac{W_{(i)}}{k(k-1)}+\displaystyle \sum_{i=k+1}^n\frac{W_{(i)}}{(n-k)(n-k+1)}
&+W_{(k)}\left(\frac{n+1}{k(n-k+1)}\right)\\
&+(W_{(k-1)}-W_{(k+1)}) \Big|.
\end{align*}
Observe that, by the law of large numbers for trimmed sums (see \cite{stigler}),
\begin{equation*}
\frac{1}{(k-1)}\Big|\displaystyle \sum_{i=1}^{k-1}W_{(i)}\Big|\overset{P}\rightarrow \eta,
\end{equation*}
where $\eta$ is a constant and hence, as $n\rightarrow \infty$,
\[
\frac{1}{k(k-1)}\Big|\displaystyle \sum_{i=1}^{k-1}W_{(i)}\Big|=O_p\left(\frac{1}{n}\right)
\]
and similarly,
\begin{equation*}
\frac{1}{(n-k)(n-k+1)}\left|\displaystyle \sum_{i=k+1}^{n}W_{(i)}\right|=O_p\left(\frac{1}{n}\right).
\end{equation*}
Moreover, since $W_{(k)}\overset{P}\rightarrow Q(p)$, for $\frac{k-1}{n}\leq p <\frac{k}{n}$, we have that
\begin{equation*}
 \frac{(n+1)}{k(n-k+1)}|W_{(k)}|=O_p\left(\frac{1}{n}\right).
\end{equation*}
Suppose $M_n=\sup_{1\leq k \leq n}(W_{(k)}-W_{(k-1)})$, then from \cite{LD}, we have that
\begin{equation*}
M_n=O_p\left(\frac{\log n}{n}\right),
\end{equation*}
and therefore
\begin{equation*}
(W_{(k-1)}-W_{(k+1)})=O_p\left(\frac{\log n}{n}\right).
\end{equation*}
It is hence the case that the RHS is $o_p\left(\frac{1}{\sqrt{n}}\right)$. This concludes the proof.
\end{proof}

Now, we are ready to prove consistency of $p_n$. As in \cite{JH2} (Theorem 1) we require a uniqueness condition.
\begin{theorem}
\label{th1}
Assume $A1-A3$ hold. Suppose that $G(p)=0$ has a unique solution, $p_0$. Then for any $0<a<p_0<b<1$
\[
p_n\overset{P}\rightarrow p_0,
\]
as $n\rightarrow \infty$.
\end{theorem}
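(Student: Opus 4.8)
The plan is to read the convergence of $p_n$ off the uniform consistency $\sup_{a\le p\le b}|G_n(p)-G(p)|\overset{P}\rightarrow 0$ of Corollary~\ref{uniform_Gn}, after first settling a purely deterministic point: that the limiting cross-over function $G$ is continuous on $[a,b]$ and actually changes sign, from positive to negative, as $p$ passes through $p_0$.

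For continuity, I would invoke the computations already used in the proof of Lemma~\ref{continuity*of*U}: under $A1$--$A3$ the maps $Q(\cdot)$ and $p\mapsto pQ_l(p)=E[W_1\mathbb{I}_{W_1<Q(p)}]=-(1-p)Q_u(p)$ are continuously differentiable on compact sub-intervals of $(0,1)$, so $G=Q_l+Q_u-2Q$ is continuous there. For the sign change I would argue as follows. Using $pQ_l(p)=-(1-p)Q_u(p)$ one computes $B(Q,p)=\tfrac{1-p}{p}\,Q_u(p)^2$, which is strictly positive on $(0,1)$, while $B(Q,0^+)=B(Q,1^-)=0$; hence $B(Q,\cdot)$ attains its maximum at an interior point, necessarily a critical point, i.e.\ a zero of $G$ because $B'(Q,\cdot)=(Q_u-Q_l)\,G$ with $Q_u-Q_l>0$. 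By the uniqueness hypothesis this maximizer is $p_0$, so $B(Q,\cdot)$ increases on $(0,p_0)$ and decreases on $(p_0,1)$; equivalently $G>0$ on $(0,p_0)$ and $G<0$ on $(p_0,1)$. (A shorter alternative: $G(0^+)=-Q(0^+)>0$ and $G(1^-)=-Q(1^-)<0$, so continuity together with uniqueness of the zero already forces this sign pattern.)

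With this in hand the rest is routine. Fix $\epsilon>0$ with $[p_0-\epsilon,p_0+\epsilon]\subset(a,b)$ and set $\delta=\min\{\min_{a\le p\le p_0-\epsilon}G(p),\ \min_{p_0+\epsilon\le p\le b}(-G(p))\}>0$, positive by continuity and the sign pattern. On the event $E_n=\{\sup_{a\le p\le b}|G_n-G|<\delta/2\}$ (whose probability tends to $1$ by Corollary~\ref{uniform_Gn}) we have $G_n>0$ throughout $[a,p_0-\epsilon]$ and $G_n<0$ throughout $[p_0+\epsilon,b]$; here one works on a slightly enlarged interval $[a',b]$, $0<a'<a$, to absorb the left-endpoint grid offset, which is legitimate since Corollary~\ref{uniform_Gn} holds for any sub-interval of $(0,1)$. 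Consequently, for $n$ large, the step sequence $k\mapsto G_n((k-1)/n)$ is positive at all admissible indices with $(k-1)/n\le p_0-\epsilon$ and negative at those with $(k-1)/n\ge p_0+\epsilon$: it does change sign, so the defining set of $p_n$ is nonempty (we are in the ``otherwise'' branch and $p_n\notin\{0,1\}$), and every $k$ contributing to $\max\{k:\ G_n((k-1)/n)G_n(k/n)\le 0\}$ must satisfy $n(p_0-\epsilon)<k<n(p_0+\epsilon)+1$. Hence $p_0-\epsilon\le p_n\le p_0+\epsilon+1/n$ on $E_n$, and since $\epsilon$ is arbitrary, $p_n\overset{P}\rightarrow p_0$.

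The genuinely delicate step is the deterministic one above, and within it the assertion that $G$ truly flips sign at $p_0$ rather than merely touching zero: the hypothesis only guarantees that $p_0$ is the unique zero, so one must bring in either the characterization of $p_0$ as the interior maximizer of the split function $B$ or the boundary behaviour of $G$. Once that is secured, the probabilistic content is supplied entirely by Corollary~\ref{uniform_Gn}, and the only remaining care is bookkeeping when passing from the continuum statement about $G$ to the grid $\{k/n\}$ and to the three-case definition of $p_n$, where the discrepancies are $O(1/n)$ and harmless. Note that Lemma~\ref{Gn_jump} is not needed for this proof; it is the input for the subsequent asymptotic-normality statement.
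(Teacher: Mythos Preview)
Your argument is essentially the paper's argument: establish that $G$ is continuous and strictly changes sign at $p_0$ by identifying $p_0$ as the interior maximizer of the split function $B(Q,\cdot)$ (using $B(Q,0{+})=B(Q,1{-})=0$), and then read off the location of the sign change of $G_n$ from the uniform convergence of Corollary~\ref{uniform_Gn}. The paper proves $B(Q,0{+})=B(Q,1{-})=0$ via Cauchy--Schwarz; your explicit formula $B(Q,p)=\tfrac{1-p}{p}\,Q_u(p)^2$ together with the same Cauchy--Schwarz bound gives the identical conclusion, so there is no substantive difference here.

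Two minor remarks. First, you assert that Lemma~\ref{Gn_jump} is not needed, whereas the paper does invoke it at the last step. You are right: once $G_n>\epsilon/2$ on $[a,p_0-\delta')$ and $G_n<-\epsilon/2$ on $(p_0+\delta',b]$, the grid values $G_n((k-1)/n)$ already pass from positive to negative, so a product sign change $G_n((k-1)/n)G_n(k/n)\le 0$ must occur with $k/n\in[p_0-\delta',p_0+\delta'+1/n]$, and the $\max$ in the definition of $p_n$ cannot land outside this window. Your device of enlarging $[a,b]$ to $[a',b]$ to absorb the single grid point with $(k-1)/n<a$ is exactly what is needed at the left boundary; the paper's appeal to the jump lemma appears to serve the same bookkeeping purpose but is not strictly necessary for consistency. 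Second, your parenthetical ``shorter alternative'' formula $G(0^{+})=-Q(0^{+})$ is not correct as an identity (for unbounded support both sides are $+\infty$, and in general $Q_l(p)-Q(p)$ need not vanish as $p\downarrow 0$), so it should not be relied upon; your main argument via $B$ is the right one and matches the paper.
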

\begin{proof}
Note that by Cauchy-Schwarz inequality we have
$$[EW_1{\mathbb I}_{W_1\geq Q(p)}]^2\leq E[W_1^2{\mathbb I}_{W_1\geq Q(p)}]P[W_1\geq Q(p)].$$
Since the second moment of $W_1$ is finite, we obtain that $B(Q,0+)=B(Q,1-)=0$. That is, a nonnegative continuously differentiable split function $B(Q,p)$ has a unique maximum at $p_0$, and, as a result, $G(p)$ does change sign at $p_0$. Choose $a,b$ such that $0<a<p_0<b<1$. Because $G$ is continuous on $[a,b]$ we get that $G(p)>0$ for $a\leq p<p_0$ and $G(p)<0$ for $b\geq p>p_0$. Moreover, for any $\delta>0$ there exists an $\epsilon>0$ and $0<\delta'<\delta$ such that
$$G(p)>\epsilon\mbox{ for }a\leq p<p_0-\delta',$$
and
$$G(p)<-\epsilon\mbox{ for }b\geq p>p_0+\delta'.$$
By Corollary~\ref{uniform_Gn}, as $n \rightarrow \infty$
\[
P\left(\sup_{a \leq p \leq b}|G_n(p)-G(p)|<\frac{\epsilon}{2}\right)\rightarrow 1,
\]
 and therefore,
\[
P\left(\inf_{a\leq p<p_0-\delta'} G_n(p)>\frac{\epsilon}{2}\mbox{ and }\sup_{b\geq p>p_0+\delta'} G_n(p)<-\frac{\epsilon}{2}\right) \rightarrow 1.
\]
Using the result from Lemma~\ref{Gn_jump}, we obtain $$P(p_n\in[p_0-\delta',p_0+\delta'])\rightarrow 1.$$
Note that since
$$P(p_n\in[p_0-\delta,p_0+\delta])\geq P(p_n\in[p_0-\delta',p_0+\delta']),$$
we finally have
$$P(p_n\in[p_0-\delta,p_0+\delta])\rightarrow 1.$$
\end{proof}

Now, under an additional assumption that $G'(p_0)<0$ (cf. with Theorem 2 from \cite{JH2}) we will establish asymptotic normality of $p_n$. This result will be proved in three steps. First, we will establish that $p_n$ is in the $O_p(1/\sqrt{n})$ neighborhood of $p_0$. Then we will show that in this neighborhood $G_n(p)$ can be adequately approximated by a line with slope $G'(p_0)$. Finally, an approach based on Bahadur's general method (see p. 95, \cite{serfling}) will be employed to get the CLT for $p_n$.
\begin{lemma}
\label{pn*neighborhood}
Assume $A1-A3$ hold. Suppose that $G(p)=0$ has a unique solution, $p_0$, and $G'(p_0)<0$. If $a,b$ are such that $0<a<p_0<b<1$, then for any $\delta>0$ there exist $N$ and $C>0$ such that for all $n\geq N$
\[
P\left(|p_n-p_0|\leq \frac{C}{\sqrt{n}}\right)>1-\delta.
\]
\end{lemma}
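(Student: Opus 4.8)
\emph{Overall approach.} The plan is to transport the local sign behaviour of the deterministic cross-over function $G$ near $p_0$ to its empirical counterpart $G_n$, using the genuine $\sqrt n$-rate contained in Theorem~\ref{G_n}. First I would record the local geometry of $G$. As already noted in the proof of Theorem~\ref{th1}, uniqueness of the zero together with $B(Q,0+)=B(Q,1-)=0$ forces $G>0$ on $(0,p_0)$ and $G<0$ on $(p_0,1)$. Since $G=Q_l+Q_u-2Q$ is continuously differentiable on $(0,1)$ ($Q_l,Q_u$ inherit continuous differentiability from $A3$) and $G'(p_0)<0$, there is an $\eta>0$ with $[p_0-\eta,p_0+\eta]\subset(a,b)$ on which $G'\le G'(p_0)/2$. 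Writing $c:=-G'(p_0)/2>0$, the mean value theorem gives $G(p)\le -c(p-p_0)$ on $[p_0,p_0+\eta]$ and $G(p)\ge c(p_0-p)$ on $[p_0-\eta,p_0]$, while continuity on the compacta $[p_0+\eta,b]$ and $[a,p_0-\eta]$ supplies fixed gaps $\sup_{[p_0+\eta,b]}G<0<\inf_{[a,p_0-\eta]}G$.

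\emph{A quantitative uniform bound.} Next I would turn the functional CLT into a tightness statement: by Theorem~\ref{G_n}, Lemma~\ref{continuity*of*U} and the mapping theorem (exactly as in Corollary~\ref{uniform_Gn}), $\sqrt n\,\sup_{a\le p\le b}|G_n(p)-G(p)|\Rightarrow\sup_{a\le p\le b}|U(p)|$, and the limiting random variable is almost surely finite (a continuous process on a compact interval). Hence, given $\delta>0$, I can choose $M$ so large that the event $A_n:=\{\sqrt n\,\sup_{a\le p\le b}|G_n(p)-G(p)|\le M\}$ satisfies $P(A_n)>1-\delta$ for all sufficiently large $n$.

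\emph{Pinning down $p_n$ on $A_n$.} This is the substantive step. Set $C:=(M+1)/c$ and take $n$ large enough that $C/\sqrt n<\eta/2$ and the gap bounds below hold. On $A_n$: for $p\in[p_0+C/\sqrt n,p_0+\eta]$, $G_n(p)\le G(p)+M/\sqrt n\le(-cC+M)/\sqrt n<0$; for $p\in[p_0+\eta,b]$, $G_n(p)\le \sup_{[p_0+\eta,b]}G+M/\sqrt n<0$ once $n$ is large; the left half is symmetric, so $G_n<0$ on all of $[p_0+C/\sqrt n,b]$ and $G_n>0$ on all of $[a,p_0-C/\sqrt n]$. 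Feeding this into the definition of $p_n$: any index $k$ with $na<k<nb$ and $G_n(\tfrac{k-1}{n})G_n(\tfrac kn)\le0$ must have $\tfrac{k-1}{n}<p_0+C/\sqrt n$ (otherwise both factors are negative, hence the product positive) and $\tfrac kn>p_0-C/\sqrt n$ (otherwise both are positive); moreover such $k$ do exist on $A_n$, since $G_n$ is positive at a grid point just below $p_0-\eta/2$ and negative at one just above $p_0+\eta/2$, which at the same time excludes the degenerate cases $p_n\in\{0,1\}$. Taking the largest such $k$ then yields $p_0-C/\sqrt n<p_n<p_0+C/\sqrt n+1/n$, so $|p_n-p_0|\le(C+1)/\sqrt n$ on $A_n$; relabelling the constant as $C$ and choosing $N$ past all the ``large $n$'' thresholds and past the index where $P(A_n)>1-\delta$ completes the proof.

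\emph{Main difficulty.} I do not expect a genuine obstacle, only bookkeeping: the one place that needs care is the behaviour of $G_n$ at the grid points nearest the endpoints of $[a,b]$, which is precisely why it is cleanest to confine the quantitative sign analysis to the fixed sub-interval $[p_0-\eta,p_0+\eta]\subset(a,b)$ and to lean on the crude gaps of the first step outside it. In contrast to the consistency proof, the step-size estimate of Lemma~\ref{Gn_jump} is not needed here.
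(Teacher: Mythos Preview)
Your proposal is correct and follows essentially the same route as the paper: convert the functional CLT into a uniform $O(1/\sqrt n)$ bound on $G_n-G$, use $G'(p_0)<0$ together with the established sign pattern of $G$ to force $G_n$ to keep the sign of $G$ outside a window of width $O(1/\sqrt n)$, and conclude via the definition of $p_n$. Your write-up is in fact more explicit than the paper's (separating the near-$p_0$ linear regime from the compact-gap regime, checking existence of a sign change and the $1/n$ grid correction), and your observation that Lemma~\ref{Gn_jump} is not needed here is accurate.
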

\begin{proof}
Fix arbitrary $\delta>0$. Using Theorem~\ref{G_n} and mapping theorem we have
$$\sup_{a\leq p\leq b}\sqrt{n}\left|G_n(p)-G(p)\right|\Rightarrow\sup_{a\leq p\leq b}|U(p)|.$$
Therefore, for any $\delta>0$ there exist $N'$ and $C'>0$ such that for all $n>N'$ we have
\begin{equation}\label{bounded*in*probability}
P\left(\sup_{a\leq p\leq b}\left|G_n(p)-G(p)\right|<\frac{C'}{\sqrt{n}}\right)>1-\delta.
\end{equation}
By the same argument as in Theorem~\ref{th1}, $p_0$  is a unique split point, $0<p_0<1$, $G(p)>0$ for $p<p_0$ and $G(p)<0$ for $p>p_0$. Assumption $G'(p_0)<0$ tells us that in a neighborhood of $p_0$ the function $G(p)$ behaves like a line. Taking this into account we get that there exist $N>N'$ and $C>0$ such that for all $n>N$
$$G(p)>\frac{2C'}{\sqrt{n}}\mbox{ for }a\leq p<p_0-\frac{C}{\sqrt{n}},$$
and
$$G(p)<-\frac{2C'}{\sqrt{n}}\mbox{ for }b\geq p>p_0+\frac{C}{\sqrt{n}}.$$
Then by (\ref{bounded*in*probability}) we find that for all $n>N$
$$P\left(\inf_{a\leq p<p_0-C/\sqrt{n}} G_n(p)>\frac{C'}{\sqrt{n}}\mbox{ and }\sup_{b\geq p>p_0+C/\sqrt{n}} G_n(p)<-\frac{C'}{\sqrt{n}}\right)>1-\delta.$$
Therefore,
\[
P\left(|p_n-p_0|\leq \frac{C}{\sqrt{n}}\right)>1-\delta.
\]
\end{proof}

\begin{lemma}\label{line}
Assume $A1-A3$ hold. Suppose that $G(p)=0$ has a unique solution, $p_0$, and $G'(p_0)<0$. Then for any $C>0$
\[
\sup_{p\in I_n}\sqrt{n}\left|G_n(p)-G_n(p_0)-G'(p_0)(p-p_0)\right|\overset{P}\rightarrow 0, \textrm{ as }n\rightarrow \infty ,
\]
where $I_n=[p_0-\frac{C}{\sqrt{n}}, p_0+\frac{C}{\sqrt{n}}]$, and
\begin{align}\label{gprime}
G^{'}(p_0)&=\frac{1}{p_0}\left[Q(p_0)-Q_l(p_0)\right]
-\frac{1}{1-p_0}\left[Q(p_0)-Q_u(p_0)\right]-2Q'(p_0).
\end{align}
\end{lemma}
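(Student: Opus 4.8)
The plan is to split, for $p\in I_n$, the quantity of interest into a stochastic fluctuation term and a deterministic curvature term. With $U_n(p)=\sqrt n(G_n(p)-G(p))$ as in Theorem~\ref{G_n}, and noting that $I_n\subset(a,b)$ once $n$ is large enough that $C/\sqrt n<\min(p_0-a,b-p_0)$, we have the exact identity
\begin{align*}
\sqrt n\bigl(G_n(p)-G_n(p_0)-G'(p_0)(p-p_0)\bigr)
&=\bigl(U_n(p)-U_n(p_0)\bigr)\\
&\quad+\sqrt n\bigl(G(p)-G(p_0)-G'(p_0)(p-p_0)\bigr).
\end{align*}
It therefore suffices to show that the supremum over $p\in I_n$ of the modulus of the first term on the right tends to $0$ in probability, and that of the second term tends to $0$ deterministically.

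For the deterministic term I would first establish formula~(\ref{gprime}). Differentiating the identities $pQ_l(p)=\int_0^pQ(q)\,dq$ and $(1-p)Q_u(p)=\int_p^1Q(q)\,dq$ yields $Q_l'(p)=\tfrac1p\bigl(Q(p)-Q_l(p)\bigr)$ and $Q_u'(p)=\tfrac1{1-p}\bigl(Q_u(p)-Q(p)\bigr)$; since $G=Q_l+Q_u-2Q$ and $Q'(p)=1/f(Q(p))$, this gives~(\ref{gprime}). Under $A1$--$A3$ the maps $p\mapsto1/p$, $1/(1-p)$, $Q$, $Q'$, $Q_l$, $Q_u$ are continuously differentiable near $p_0$, so $G$ is twice continuously differentiable there; Taylor's theorem then gives, for $p\in I_n$, $|G(p)-G(p_0)-G'(p_0)(p-p_0)|\le\tfrac12\bigl(\sup_{I_n}|G''|\bigr)(C/\sqrt n)^2=O(1/n)$, and hence $\sup_{p\in I_n}\sqrt n\,|G(p)-G(p_0)-G'(p_0)(p-p_0)|=O(1/\sqrt n)\to0$.

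The substance of the proof is the stochastic term: $\sup_{p\in I_n}|U_n(p)-U_n(p_0)|\overset{P}{\rightarrow}0$. By Theorem~\ref{G_n}, $U_n\Rightarrow U$ in $(D[a,b],J_1)$, and by Lemma~\ref{continuity*of*U} the limit $U$ has continuous paths. Since $J_1$-convergence to a limit lying in $C[a,b]$ is convergence in the uniform metric, the Skorohod representation theorem lets us realize all the processes on one probability space so that $\|U_n-U\|_{\infty}\to0$ almost surely, where $\|\cdot\|_\infty$ denotes the supremum over $[a,b]$. Writing $w(\delta)=\sup\{|U(s)-U(t)|:s,t\in[a,b],\,|s-t|\le\delta\}$ for the (random) modulus of continuity of $U$, we obtain for every $p\in I_n$
\begin{align*}
|U_n(p)-U_n(p_0)|&\le|U(p)-U(p_0)|+2\|U_n-U\|_{\infty}\\
&\le w\bigl(C/\sqrt n\bigr)+2\|U_n-U\|_{\infty},
\end{align*}
and both terms on the right tend to $0$ almost surely --- the first because $U$ is uniformly continuous on the compact $[a,b]$. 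Thus $\sup_{p\in I_n}|U_n(p)-U_n(p_0)|\to0$ almost surely for the chosen representations, hence in probability for the original sequence, and combining with the deterministic estimate proves the lemma.

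The main obstacle is exactly this last step: one must control the oscillation of $U_n$ across an interval whose length shrinks at rate $1/\sqrt n$, which is not available from finite-dimensional convergence and genuinely uses the functional limit theorem together with the continuity (hence uniform continuity on $[a,b]$) of $U$. An alternative to the Skorohod-representation argument is to invoke directly the modulus-of-continuity form of tightness in $D[a,b]$: weak convergence to a continuous limit implies that for every $\epsilon>0$, $\lim_{\delta\downarrow0}\limsup_n P\bigl(\sup_{s,t\in[a,b],\,|s-t|\le\delta}|U_n(s)-U_n(t)|>\epsilon\bigr)=0$; since $I_n\subset[p_0-\delta,p_0+\delta]$ for all large $n$, letting first $n\to\infty$ and then $\delta\downarrow0$ again yields $\sup_{p\in I_n}|U_n(p)-U_n(p_0)|\overset{P}{\rightarrow}0$.
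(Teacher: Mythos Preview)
Your proof is correct and follows the same overall strategy as the paper: the identical decomposition into the stochastic increment $U_n(p)-U_n(p_0)$ and the deterministic Taylor remainder, with the latter dispatched via $A3$ exactly as in the paper. The only difference is in how you extract the stochastic equicontinuity from Theorem~\ref{G_n}: you invoke Skorohod representation (uniform a.s.\ convergence to a continuous limit, then bound by the modulus of continuity of $U$), whereas the paper applies the mapping theorem to the functional $x\mapsto\sup_{|p-p_0|\le\delta'}|x(p)-x(p_0)|$ on a fixed interval $[p_0-\delta',p_0+\delta']\supset I_n$, takes $\limsup_n$, and then lets $\delta'\downarrow 0$ using continuity of $U$. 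Your ``alternative'' at the end is precisely the paper's argument; both routes are standard and equivalent, with the Skorohod version slightly more direct and the paper's version avoiding the change of probability space.
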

\begin{proof} Since the second derivative of $G(p)$ is uniformly continuous  on $p_0-C/\sqrt{n}\leq p\leq p_0-C/\sqrt{n}$ we have
\begin{align*}
G(p)-G(p_0)&=(p-p_0)G'(p)+O((p-p_0)^2)\\
           &=(p-p_0)G'(p)+O(1/n)
\end{align*}
It is hence sufficient to show that
$$
\sup_{p\in I_n}\sqrt{n}\left|[G_n(p)-G(p)]-[G_n(p_0)-G(p_0)]\right|\overset{P}\rightarrow 0,
$$
or that for any $\epsilon>0$ and $\delta>0$ there exists $N$ such that for all $n>N$
$$
P\left(\sup_{p\in I_n}\sqrt{n}\left|[G_n(p)-G(p)]-[G_n(p_0)-G(p_0)]\right|>\epsilon\right)<\delta.
$$
Take arbitrary $\delta'>0$.
The functional  $\sup_{p\in [p_0-\delta',p_0+\delta']}|x(p)-x(p_0)|$ is continuous on $C[a,b]$ equipped with the uniform metric. Therefore, Theorem~\ref{G_n} and the mapping theorem informs us that
$$
\sup_{p_0-\delta'\leq p\leq p_0+\delta'}\sqrt{n}\left|[G_n(p)-G(p)]-[G_n(p_0)-G(p_0)]\right|\Rightarrow \sup_{p_0-\delta'\leq p\leq p_0+\delta'}\left|U(p)-U(p_0)\right|.
$$

Since for  all sufficiently large $n$ we have
\begin{align*}
\sup_{p\in I_n}\sqrt{n}\left|[G_n(p)-G(p)]-[G_n(p_0)-G(p_0)]\right|&\\
\leq \sup_{p_0-\delta'\leq p\leq p_0+\delta'}\sqrt{n}\left|[G_n(p)-G(p)]-[G_n(p_0)-G(p_0)]\right| \quad \text{a.s.},
\end{align*}
it is indeed the case that
\begin{align*}
P\Big(\sup_{p\in I_n}\sqrt{n}\left|[G_n(p)-G(p)]-[G_n(p_0)-G(p_0)]\right|>\epsilon\Big)&\\
\leq P\Big(\sup_{p_0-\delta'\leq p\leq p_0+\delta'}\sqrt{n}\left|[G_n(p)-G(p)]-[G_n(p_0)-G(p_0)]\right|>\epsilon\Big)&.
\end{align*}
As a consequence,
\begin{align*}
\limsup_n & P\Big(\sup_{p\in I_n}\sqrt{n}\left|[G_n(p)-G(p)]-[G_n(p_0)-G(p_0)]\right|>\epsilon\Big)\\
\leq&P\Big(\sup_{p_0-\delta'\leq p\leq p_0+\delta'}\left|U(p)-U(p_0)\right|>\epsilon\Big).
\end{align*}
Because the Gaussian process $U(p)$ is continuous, $\sup_{p_0-\delta'\leq p\leq p_0+\delta'}\left|U(p)-U(p_0)\right|\to 0$ with probability 1 as $\delta'\to 0$, and, therefore, it converges to 0 in probability. Choosing $\delta'$ small enough we can make
$$\limsup_n P\Big(\sup_{p\in I_n}\sqrt{n}\left|[G_n(p)-G(p)]-[G_n(p_0)-G(p_0)]\right|>\epsilon\Big)< \delta.$$
\end{proof}
\begin{lemma}\label{p_n}
Assume $A1-A3$ hold. Suppose that $G(p)=0$ has a unique solution, $p_0$, and $G'(p_0)<0$. If $a,b$ are such that $0<a<p_0<b<1$ then as $n\rightarrow \infty$
\[
	p_n=p_0-\frac{G_n(p_0)}{G'(p_0)}+o_p(n^{-1/2}),
\]
where $G'(p)$ is as defined in (\ref{gprime}).
\end{lemma}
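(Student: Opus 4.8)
The plan is to turn the three preceding lemmas into a Bahadur-type linearization. Write $p^{\ast}_n := p_0 - G_n(p_0)/G'(p_0)$; since $G(p_0)=0$, the pointwise form of Theorem~\ref{G_n} gives $\sqrt n\,G_n(p_0)=O_p(1)$, so $p^{\ast}_n$ already lies in an $O_p(n^{-1/2})$ neighborhood of $p_0$. It therefore suffices to show $\sqrt n\,|p_n-p^{\ast}_n|\overset{P}{\rightarrow}0$, and the stated representation follows at once because $G'(p_0)$ is a fixed nonzero constant.

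First I would fix $\delta>0$ and invoke Lemma~\ref{pn*neighborhood} to choose $C>0$ and $N$ so that $P(p_n\in I_n)>1-\delta$ for $n\ge N$, where $I_n=[p_0-C/\sqrt n,\ p_0+C/\sqrt n]$. On this event, for $n$ large $p_n\in(a,b)$, so $p_n$ is produced by the ``otherwise'' branch of its definition: $p_n=\hat k/n$ for some index $\hat k$ with $na<\hat k<nb$ and $G_n\!\left(\tfrac{\hat k-1}{n}\right)G_n\!\left(\tfrac{\hat k}{n}\right)\le 0$. Since the product of these two \emph{consecutive} values of the step function $G_n$ is $\le 0$, the value $|G_n(p_n)|=|G_n(\hat k/n)|$ is no larger than the single jump $\bigl|G_n(\tfrac{\hat k}{n})-G_n(\tfrac{\hat k-1}{n})\bigr|$; by Lemma~\ref{Gn_jump} — whose proof in fact yields the bound uniformly over $na<k<nb$, the estimates on the trimmed sums, on $W_{(k)}$, and on $M_n$ all being uniform — this jump is $o_p(n^{-1/2})$. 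Hence $\sqrt n\,|G_n(p_n)|\,\mathbb{I}_{\{p_n\in I_n\}}\overset{P}{\rightarrow}0$.

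Next I would apply Lemma~\ref{line} with the same $C$: writing $r_n:=\sup_{p\in I_n}\sqrt n\,\bigl|G_n(p)-G_n(p_0)-G'(p_0)(p-p_0)\bigr|$, we have $r_n\overset{P}{\rightarrow}0$. Evaluating the defining inequality at $p=p_n$ on $\{p_n\in I_n\}$ and using the triangle inequality together with $G_n(p_0)+G'(p_0)(p_n-p_0)=G'(p_0)(p_n-p^{\ast}_n)$,
\[
|G'(p_0)|\,\sqrt n\,|p_n-p^{\ast}_n|=\sqrt n\,\bigl|G_n(p_0)+G'(p_0)(p_n-p_0)\bigr|\le \sqrt n\,|G_n(p_n)|+r_n .
\]
The right-hand side tends to $0$ in probability on $\{p_n\in I_n\}$, so for every $\epsilon>0$, $\limsup_n P\bigl(\sqrt n\,|p_n-p^{\ast}_n|>\epsilon\bigr)\le\limsup_n P(p_n\notin I_n)\le\delta$; letting $\delta\downarrow 0$ gives $\sqrt n\,|p_n-p^{\ast}_n|\overset{P}{\rightarrow}0$, which is the assertion.

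The argument is essentially bookkeeping once Lemmas~\ref{Gn_jump}, \ref{pn*neighborhood} and \ref{line} are available; the only point needing genuine care is the discreteness of $G_n$, namely the observation that the sign change defining $p_n$ forces $|G_n(p_n)|$ to be dominated by a single jump of $G_n$, and the accompanying check that the jump bound of Lemma~\ref{Gn_jump} holds uniformly over the relevant range of indices so that it may be applied at the data-dependent index $\hat k$. Everything else is a deterministic $\epsilon$–$\delta$ manipulation carried out on the high-probability event $\{p_n\in I_n\}$.
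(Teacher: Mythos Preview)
Your argument is correct and follows the same Bahadur-type linearization as the paper: define $p^{\ast}=p_0-G_n(p_0)/G'(p_0)$, place both $p^{\ast}$ and $p_n$ in an $O_p(n^{-1/2})$ neighborhood of $p_0$ via Theorem~\ref{G_n} and Lemma~\ref{pn*neighborhood}, and then use the uniform linear approximation of Lemma~\ref{line} to conclude $p_n=p^{\ast}+o_p(n^{-1/2})$.

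The one genuine addition in your write-up is that you make explicit the step the paper compresses into the phrase ``by which we can claim that $p_n=p^{\ast}+o_p(n^{-1/2})$'': evaluating the linear approximation at $p=p_n$ gives $G'(p_0)(p_n-p^{\ast})=G_n(p_n)+o_p(n^{-1/2})$, so one must know that $G_n(p_n)=o_p(n^{-1/2})$. You obtain this from the sign-change definition of $p_n$ together with the jump bound of Lemma~\ref{Gn_jump}, correctly flagging that the lemma must be applied at the data-dependent index $\hat k$ and hence needs the uniform (over $na<k<nb$) version of its estimates. That uniformity is indeed available from the ingredients of the proof of Lemma~\ref{Gn_jump} (the spacing bound via $M_n$ is already a supremum; the order statistics and trimmed means are uniformly bounded on $[a,b]$ by Corollary~\ref{uniform_Gn} and monotonicity), so the step is sound. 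In short, your proof is the paper's proof with the implicit ``$G_n(p_n)=o_p(n^{-1/2})$'' step spelled out.
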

\begin{proof}

Consider the line $G_n(p_0)+G'(p_0)(p-p_0)$. Let random variable $p^*$ be the solution of
\[
G_n(p_0)+G'(p_0)(p-p_0)=0,	
\]
that is,
\begin{equation}\label{pstar}
	p^*=p_0-\frac{G_n(p_0)}{G'(p_0)}.
\end{equation}
From Theorem \ref{G_n}, we know that $$G_n(p_0)-G(p_0)=G_n(p_0)=O_p(n^{-1/2})$$ and we hence have that $p^*=p_0+O_p(n^{-1/2})$. By Lemma~\ref{pn*neighborhood} $p_n$ is also in a $O_p(1/\sqrt{n})$ neighborhood of $p_0$.  By Lemma~\ref{line} in $O_p(1/\sqrt{n})$ neighborhood of $p_0$ uniformly $$G_n(p)=G_n(p_0)+G'(p_0)(p-p_0)+o_p(n^{-1/2}),$$ by which we can claim that $p_n=p^*+o_p(n^{-1/2})$.
The result follows by substituting for $p^*$ in (\ref{pstar}).
\end{proof}
This immediately give us the final result.
\begin{theorem}\label{normal}
Assume $A1-A3$ hold. Suppose that $G(p)=0$ has a unique solution, $p_0$, and $G'(p_0)<0$. If $a,b$ are such that $0<a<p_0<b<1$ then as $n\rightarrow \infty$,
\[
\sqrt{n}(p_n-p_0)\Rightarrow N\left(0,\frac{Var(\theta_{p_0})}{G'^2(p_0)}\right),
\]
where $\theta_{p_0}$ is as defined in Theorem \ref{G_n}.
\end{theorem}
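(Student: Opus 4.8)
The plan is to read the conclusion off the Bahadur-type representation already established in Lemma~\ref{p_n}, combined with the one-dimensional marginal of the functional limit theorem in Theorem~\ref{G_n}. Concretely, Lemma~\ref{p_n} gives
\[
\sqrt{n}(p_n-p_0)=-\frac{\sqrt{n}\,G_n(p_0)}{G'(p_0)}+o_p(1),
\]
and the division is legitimate since the standing hypothesis $G'(p_0)<0$ guarantees $G'(p_0)\neq 0$. Hence the whole problem reduces to identifying the limiting law of $\sqrt{n}\,G_n(p_0)$.

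For that, observe that $p_0$ solves $G(p)=0$, so $G(p_0)=0$ and therefore $\sqrt{n}\,G_n(p_0)=\sqrt{n}\bigl(G_n(p_0)-G(p_0)\bigr)=U_n(p_0)$. Theorem~\ref{G_n} (its one-dimensional marginal at the single point $p_0$, which lies in $(a,b)$) gives $U_n(p_0)\Rightarrow U(p_0)$, where $U(p_0)$ is centered Gaussian with variance $C(p_0,p_0)$. By the definition of the covariance function in~(\ref{cov*function}), $C(p_0,p_0)=Cov(\theta_{p_0},\theta_{p_0})=Var(\theta_{p_0})$, so $\sqrt{n}\,G_n(p_0)\Rightarrow N\bigl(0,Var(\theta_{p_0})\bigr)$.

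It remains to assemble the pieces: multiplying the Gaussian limit by the deterministic constant $-1/G'(p_0)$ turns it into $N\bigl(0,Var(\theta_{p_0})/G'^2(p_0)\bigr)$, and since the remainder in the representation above is additive and $o_p(1)$, a single application of Slutsky's theorem yields
\[
\sqrt{n}(p_n-p_0)\Rightarrow N\!\left(0,\frac{Var(\theta_{p_0})}{G'^2(p_0)}\right).
\]

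I do not expect a genuine obstacle at this stage: the substantive work has already been carried out in Lemma~\ref{pn*neighborhood} (localization of $p_n$ to an $O_p(n^{-1/2})$ neighborhood of $p_0$), Lemma~\ref{line} (linearization of $G_n$ near $p_0$ with slope $G'(p_0)$), and Lemma~\ref{p_n} (the resulting asymptotically linear representation). The only points requiring a moment's care are the bookkeeping identification $C(p_0,p_0)=Var(\theta_{p_0})$ and confirming that the $o_p(1)$ term does not interact with the weak limit, both of which are routine; one could alternatively note that the same conclusion follows from the asymptotically linear expansion of $G_n(p_0)$ in terms of the i.i.d.\ summands $\theta_{p_0}$ together with the classical central limit theorem, which is consistent with Theorem~\ref{G_n}.
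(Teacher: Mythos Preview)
Your proof is correct and matches the paper's approach exactly: the paper presents Theorem~\ref{normal} as an immediate consequence of Lemma~\ref{p_n} (``This immediately give us the final result''), and you have simply spelled out the details---rewriting the Bahadur representation as $\sqrt{n}(p_n-p_0)=-\sqrt{n}G_n(p_0)/G'(p_0)+o_p(1)$, using $G(p_0)=0$ to identify $\sqrt{n}G_n(p_0)$ with $U_n(p_0)$, invoking the one-point marginal of Theorem~\ref{G_n}, and closing with Slutsky.
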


\subsection{Numerical Verification}
In this section we provide verification of our results regarding the asymptotic normality of $p_n$ along the lines of Table 1 in \cite{JH2}. Since our split point $p_0$ coincides with Hartigan's split point (maximum of $B(Q,p)$), it is to be expected that our empirical split point $p_n$ behaves asymptotically similar to his. Hartigan verifies his results when observations are obtained from a $N(0,1)$ population---a population with smooth distribution function; we do the same and note that the asymptotic mean and the variance of $p_n$ agree with his.

It is a simple exercise to ascertain that, for the normal case, the split point $p_0$ is $0.5$, $G'^2(0.5)\approx 3.34$ and $Var(\theta_{0.5})=2 \pi -4\approx 2.283$. Consequently, we observe that the asymptotic variance of $\sqrt{n}(p_n-0.5)$ is approximately $0.69$. The table below corroborates our theoretical results.
\begin{table*}[th]
\begin{center}
\caption{Simulated mean and variance of $\sqrt{n}(p_n-0.5)$ for different sample sizes for the normal case. $1000$ simulations were performed for each sample size.}
\vspace{2mm}
\begin{tabular}{|c||c|c|c|c|}
\hline
Sample sizes& $n=100$ & $n=300$& $n=500$& $n=1000$\\ \hline
Simulated Mean &$0.506$& $0.504$&$0.501$ &$0.502$ \\ \hline
Simulated Variance &$0.614$& $0.646$&$0.700$ &$0.691$ \\ \hline
\end{tabular}
\end{center}
\end{table*}

\subsection{An Example: Confidence Interval Estimation}
We demonstrate here how Theorem~\ref{normal} can be employed to construct approximate confidence intervals (CI) for a theoretical split point.
We consider a classical example of bimodal distribution---the variable ``eruption" in the data set {\tt faithful} available in {\tt R} package {MASS}.
The data set contains 272 measurements of the duration of eruption for the Old Faithful geyser in Yellowstone National Park, Wyoming, USA. 

First, we plot the ECF for the variable "eruption"; the plot is given in Figure~1.  
\begin{figure}[ht]
\includegraphics[scale=0.4,angle=90]{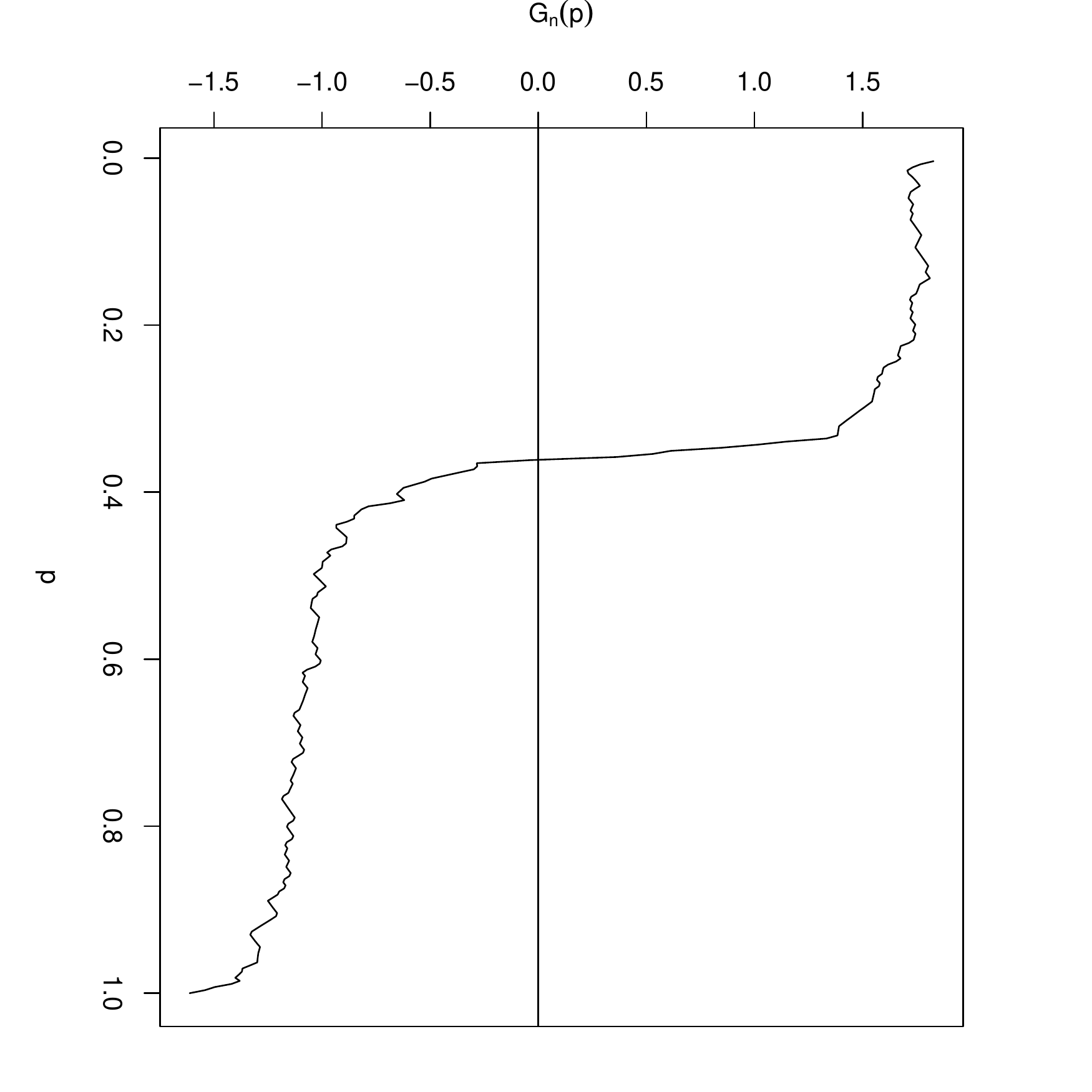}
\caption{Empirical Cross-over Function $G_n(p)$ for data set {\tt faithful}.}
\label{faithful}
\end{figure}
We can see that $G_n(\cdot)$ is generally a decreasing function
that crosses zero line once, far away from $0$ and $1$: the end-points of its domain which is the $(0,1)$ interval. Thus our point estimate of theoretical split point is $p_n=97/272\approx.357$. 

Now, to construct an approximate CI for $p_0$ we need to estimate $Var(\theta_{p_0})/G'^2(p_0)$. A straightforward (but rather tedious) calculation shows that this quantity explicitly depends on the following terms:
$p_0$, $Q(p_0)$, $f(Q(p_0))$, $Q_l(p_0)$, $Q_u(p_0)$,
\begin{equation*}
B_l(p_0)=\frac{1}{p_0}E[W_1^2{\mathbb I}_{W_1<Q(p_0)}], \mbox{ and } B_u(p_0)=\frac{1}{1-p_0}E[W_1^2{\mathbb I}_{W_1\geq Q(p_0)}].
\end{equation*}
We estimate these terms as follows:
$$p_0\approx p_n,\quad Q(p_0)\approx W_{(98)},$$
$$Q_l(p_0)\approx\frac{1}{98}\sum_{i=1}^{98}W_{(i)},\quad Q_u(p_0)\approx\frac{1}{272-98}\sum_{i=99}^{272}W_{(i)},$$
$$B_l(p_0)\approx\frac{1}{98}\sum_{i=1}^{98}W_{(i)}^2,\quad B_u(p_0)\approx\frac{1}{272-98}\sum_{i=99}^{272}W_{(i)}^2.$$
Finally, $f(Q(p_0))$ is estimated by $\hat{f}(W_{(98)})$, where $\hat{f}$ comes from the standard {\tt R} function {\tt density}. 
As a result, for instance, the 95\% confidence interval for a theoretical split point $p_0$ is given by
$$.357\pm .057.$$

\section{Discussion}\label{discussion}
Admittedly, the definition of the empirical split point ``in the range $[a,b]$'' might appear a bit artificial. But we still believe the results can be useful in practical applications. For instance, as with the {\tt faithful} data, if we know that the distribution at hand is bimodal, and we want to estimate a split point between two clusters, it is safe to assume that the split point is in the range between two modes.

{ It turns out that the behavior of the cross-over function when it is close to 0 or 1 can be rather complicated; under some natural assumptions, it can be shown that $\limsup_{p\uparrow 1} G(p)<0$. For example, it is true if $W_1$ is bounded from above. When $Q(1-)=+\infty$, the following condition
\begin{equation}\label{mises}
\limsup_{x\to\infty}\frac{E(W_1{\mathbb I}_{W_1\geq x})}{xP(W_1\geq x)}<2
\end{equation}
is sufficient for $\limsup_{p\uparrow 1} G(p)<0$. It is easy to see that, for instance, distributions with regularly varying tails and $EW_1^{2+\epsilon}<\infty$, for $\epsilon>0$, satisfy (\ref{mises}). However, it is possible to construct a distribution with a ``bumpy'' tail for which $\limsup_{p\uparrow 1} G(p)\geq 0$. Consequently, it is suggestive that any extension of definition of $p_n$ to the entire interval $(0,1)$ will require some additional assumptions.}

\section{Acknowledgements} We are  thankful to the Associate Editor and referee for careful reading of the manuscript, thoughtful criticisms and very helpful suggestions that allowed us to significantly improve the presentation of our results. We also want to thank Zhiyi Chi for discussions on the behavior of the cross-over function around 0 and 1.

\bibliography{ref}
\bibliographystyle{imsart-nameyear}
\end{document}